\newtheorem{theorem}{Theorem}
\newtheorem{lemma}{Lemma}
\newtheorem{proposition}{Proposition}
\newtheorem{remark}{Remark}
\title{A Large deviations principle at zero temperature for stationary Markov equilibrium states on countable Markov shifts}
\author{Victor Vargas\thanks{Center for Mathematics at the University of Porto.  Supported by FCT through the project UIDP/00144/2020}}
\begin{document}

\maketitle

\begin{abstract}
Consider a topologically transitive unilateral countable Markov shift $\Sigma$, a locally constant potential $\phi : \Sigma \to \mathbb{R}$ satisfying suitable conditions, and assume that $\mu_t$ is the unique stationary Markov equilibrium state associated to the potential $t\phi$ for each $t \geq 1$.  In this paper we prove a first level large deviations principle at zero temperature for the family of equilibrium states $(\mu_t)_{t \geq 1}$ and we extend the result to the setting of bilateral countable Markov shifts.
\end{abstract}

\vspace{2mm}

{\footnotesize {\bf Keywords:} Countable Markov shift, equilibrium state, large deviations, stationary Markov measure, zero temperature limit}

\vspace*{2mm}

{\footnotesize {\bf Mathematics Subject Classification (2020):}  28Dxx, 37Axx, 60Jxx.}

\vspace*{2mm}

\section{Introduction}

The thermodynamic formalism on the context of countable Markov shifts have been widely studied by several authors.  For instance,  in \cite{MR1738951} they were presented basic results in this setting,  like the existence of equilibrium states via Ruelle-Perron-Frobenius theorem and some interesting properties of the Gurevich pressure besides a variational principle.  Some time later,  in \cite{MR2151222} it was studied the behavior of a family of equilibrium states associated to a suitable potential guaranteeing the existence of ground states when the countable Markov shift satisfies the so-called BIP property.  Later,  in \cite{MR2800665} it was showed the uniqueness of such a ground state in the context of locally constant potentials.  Furthermore,  the former results were generalized to the setting of topologically transitive countable Markov shifts assuming similar conditions on the potential in \cite{MR3864383} and \cite{MR4637152}.

In this paper we present a large deviation principle at zero temperature in the context of topologically transitive countable Markov shifts as a generalization to a more general dynamical setting of results appearing in \cite{MR3114331},  \cite{MR3779019} and \cite{BMP15}.  The mentioned result is proved following similar approaches to the ones proposed in \cite{MR2210682} and \cite{MR2496111} using a couple of techniques well-known in the mathematical literature as calibrated sub-actions and involution kernels (see \cite{MR1855838},  \cite{MR1841880} and \cite{MR1827117} for seminal approaches).  

In fact,  the so-called involution kernel has been useful as a tool to prove existence and uniqueness of Gibbs states and characterize the eigenfunctions associated to the Ruelle-Perron-Frobenius operator on interesting classes of dynamical systems via duality (see for instance \cite{MR3656287},  \cite{MR3377291} and \cite{MR1827117}).  Furthermore,  in \cite{MR2210682},  \cite{MR2496111} and \cite{zbMATH07528592},  the authors use the involution kernel to prove large deviation principles at zero temperature on interesting dynamical approaches.  

On the other hand,  the so-called sub-actions appear as dual objects of the maximizing measures associated to a suitable potential,  which become very useful at the moment of characterizing the support of the mentioned measures (see for instance \cite{MR2563132} and \cite{MR2765475}).  Moreover,  several properties of the so-called maximizing measures on the symbolic dynamics approach can be found in the works \cite{MR2563132},  \cite{MR2191393} and \cite{MR3529118}.

In here,  we are able to prove a large deviation principle using the characterization of the equilibrium state associated to a locally constant potential as a Markov stationary probability measure,  where the so-called rate function is given in terms of the maximizing value of the potential and a sub-action of the potential which can be obtained via Ruelle-Perron-Frobenius theory.  Besides that,  we extend that large deviation principle to a bilateral approach using as main tool the involution kernels associated to the potential.  

This paper is organized as follows: In section \ref{mr-section} there are presented the basic definitions related with the theory which we are interested in and there are stated the main results to be proved in this work.  In section \ref{LDP-section} appear the proofs of the main results of this paper,  the first one related with the large deviations principle on unilateral countable Markov shifts appears in section \ref{ULDP-section} and in section \ref{BLDP-section} is presented the extension to the bilateral context.

\section{Statement of the main result}
\label{mr-section}

Consider the {\bf alphabet} $S := \mathbb{N}$ and the {\bf space of sequences} $X := S^{\mathbb{N}_0}$ equipped with the {\bf product topology},  where $\mathbb{N}_0 := \mathbb{N} \cup \{0\}$.  It is not difficult to check that such a topology on $X$ can be induced by {\bf cylinders}, i.e., sets of the form $[\omega] := \{x \in X :\; x_0 = \omega_0, ... , x_n = \omega_n\}$,  with $|\omega| := n+1$ the length of the cylinder.  Furthermore,  it is easy to check that the {\bf metric} $d(x, y) := 2^{-m(x, y)}$, where $m(x, y) := \min\{n \in \mathbb{N}_0:\; x_n \neq y_n\}$, induces the structure of a {\bf metric space} on $X$.  The {\bf shift map} $\sigma : X \to X$ is defined by $\sigma((x_n)_{n \in \mathbb{N}_0}) := (x_{n + 1})_{n \in \mathbb{N}_0}$. 

Given a matrix $M = (m_{ij})_{S \times S}$ of zeroes and ones such that no column or row of $M$ is composed only by zeroes,  we define the {\bf countable Markov shift} $\Sigma$ with {\bf incidence matrix} $M$ by $\Sigma := \{x \in X :\; m_{x_i x_{i+1}} = 1\; \forall i \in \mathbb{N}_0 \}$, with the shift map $\sigma$ acting on it. 

Consider a {\bf continuous function} $\phi : \Sigma \to \mathbb{R}$, we say that $\phi$ is a {\bf locally constant} potential when there is $k \in \mathbb{N}_0$ such that $\phi(x) = \phi(x_0\; ...\; x_k)$ for each $x \in \Sigma$. Further, $\phi$ is called a {\bf Markov potential} when $\phi(x) = \phi(x_0 x_1)$ and in that case we are able to define the {\bf transpose potential} of $\phi$ by the expression $\phi^\intercal(x_0 x_1) = \phi(x_1 x_0)$. 

It is well-known that, up to a new codification of the Markov shift $\Sigma$, any locally constant potential can be expressed as a Markov potential. Because of that,  from now on we assume that $\phi(x) = \phi(x_0x_1)$ and all the proofs of the results presented here are given under that hypothesis. 

A potential $\phi$ is called {\bf coercive} when $\lim_{a \to \infty}\sup(\phi|_{[a]}) = -\infty$ and we say that $\phi$ is a {\bf summable potential} when $\sum_{a \in S} e^{\sup(\phi|_{[a]})} < \infty$.  Note that any summable potential is coercive, however the converse is not necessarily true.  Besides that, we define the {\bf first variation} of the potential $\phi$ by the expression $V_1(\phi) := \sup\{|\phi(x_0 x_1) - \phi(y_0 y_1)| :\; x_0 = y_0\}$. 

Given a metric space $Y$,  denote by $\mathrm{C}_b(Y)$, resp. $\mathcal{M}_1(Y)$, resp. $\mathcal{M}_\sigma(Y)$ the set of {\bf bounded continuous functions}, resp. {\bf Borel probability measures}, resp. {\bf Borel $\sigma$-invariant probability measures} on $Y$ and denote by $\mathcal{B}_Y$ the collection of {\bf Borel sets} on $Y$.  Besides that,  given $\mu  \in \mathcal{M}_\sigma(Y)$ we will use the notation $\mu(\phi) := \int_Y \phi d\mu$.

Consider $\mu \in \mathcal{M}_\sigma(\Sigma)$ and a potential $\phi$ such that $\mu(\phi) > -\infty$, the {\bf free energy} of $\phi$ w.r.t. $\mu$ is defined by $P_\mu(\phi) := h(\mu) + \mu(\phi)$, where $h(\mu)$ is the {\bf Kolmogorov-Sinai entropy} of the measure $\mu$.  The {\bf topological pressure} of $\phi$ is given by $P_{top}(\phi) := \sup\{P_\mu(\phi) :\; \mu \in \mathcal{M}_\sigma(\Sigma) \text{ and } \mu(\phi) > -\infty \}$ and we say that $\mu \in \mathcal{M}_\sigma(\Sigma)$ is an {\bf equilibrium state} for $\phi$ when $P_{\mu}(\phi) = P_{top}(\phi)$.

Assume that $\Sigma$ is {\bf topologically transitive} and $V_1(\phi) < \infty$, the {\bf Gurevich pressure} of the potential $\phi$ is defined as $P_G(\phi) := \limsup_{n \to \infty} \frac{1}{n} Z_n(\phi, a)$,  where $Z_n(\phi, a) := \sum_{\sigma^n x = x} e^{S_n \phi(x)}{\bf 1}_{[a]}(x)$ and $S_n \phi$ is the {\bf $n$-th ergodic sum}.  It is well-known that the former expression is independent of the choice of $a \in S$ and the limit exists when $\Sigma$ is {\bf topologically mixing}.  Moreover,  we have that $P_G(\phi) = P_{top}(\phi)$ when $\|L_\phi {\bf 1}\|_\infty < \infty$ (see \cite{MR1738951} for a detailed proof).  

Given $f \in \mathrm{C}_b(\Sigma)$,  the {\bf Ruelle operator} $L_\phi$ associated to $\phi$ calculated into the map $f$ is given by the expression $L_\phi(f)(x) := \sum_{y \in \sigma^{-1}(\{x\})} e^{\phi(y)}f(y)$.  In particular,  when $f \in \mathrm{C}_b(S)$,  the former expression is reduced to the following one $L_\phi(f)(x_0) := \sum_{\substack{a \in S \\ m_{a x_0} = 1}} e^{\phi(a x_0)}f(a)$.

It is not difficult to check that, under suitable conditions, $L_\phi$ preserves the space $\mathrm{C}_b(\Sigma)$. Actually, when $\phi$ is a summable potential with $P_G(\phi) < \infty$, there exists a unique equilibrium state $\mu_t$ associated to $t\phi$ for each $t \geq 1$ and such a probability measure is of the form $\mu_t = h_t d\nu_t$ where $L_{t\phi} h_t = e^{P_G(t\phi)} h_t$ and $L_{t\phi}^* \nu_t = e^{P_G(t\phi)} \nu_t$ (see \cite{MR2018604} and \cite{MR1738951}). 

Furthermore,  defining the {\bf transpose Ruelle operator} of the potential $\phi$ calculated in $f \in \mathcal{C}_b(S)$ by $L^\intercal_\phi(f)(x_0) := \sum_{\substack{a \in S \\ m_{x_0 a} = 1}} e^{\phi(x_0 a)}f(a)$,  it is not difficult to check that under the same hypothesis there exists a unique equilibrium state associated to the potential $t\phi^\intercal$ for each $t \geq 1$ which is of the form $\mu^\intercal_t = h^\intercal_t d\nu^\intercal_t$ where $L^\intercal_{t\phi} h^\intercal_t = e^{P_G(t\phi)} h^\intercal_t$ and $(L^\intercal_{t\phi})^* \nu^\intercal_t = e^{P_G(t\phi)} \nu^\intercal_t$.

Actually,  in \cite{MR767801} was proven that $\mu_t \in \mathcal{M}_\sigma(\Sigma)$ results in a {\bf Markov stationary probability measure} satisfying the following expression
\begin{equation}
\label{SMM}
\mu_t([\omega_0\; ...\; \omega_n]) = \pi_t(\omega_0)\prod_{i=0}^{n-1}P_t(\omega_i \omega_{i+1}) \;,
\end{equation}
where $\pi_t(a) = h_t(a)h_t^\intercal(a)$ for each $a \in S$ and $P_t(ab) := \frac{h_t^\intercal(b)}{h_t^\intercal(a)}e^{t\phi(ab) - P_G(t\phi)}$ for each $a, b \in S$ such that $m_{ab} = 1$. In particular, we also have $\sum_{a \in S}\pi_t(a) = 1$ and $\sum_{\substack{b \in \mathbb{N} \\ m_{ab} = 1}}P_t(ab) = 1$. 

On the other hand,  we say that $\mu_\infty \in \mathcal{M}_\sigma(\Sigma)$ is a {\bf ground state},  when it is an accumulation point of the family of equilibrium measures $(\mu_t)_{t \geq 1}$ at $\infty$,  which is also known as a zero temperature limit.  In fact,  under the assumption that $V_1(\phi) < \infty$ was proved existence and uniqueness of such a ground state $\mu_\infty$ (see \cite{MR3864383} and \cite{MR4637152} for details).  Furthermore,  it was proven that the accumulation point belongs to the set of {\bf maximizing measures} $\mathcal{M}_{\max}(\phi):= \{\mu \in \mathcal{M}_\sigma(\Sigma) :\; \mu(\phi) = \alpha(\phi)\}$,  where the {\bf maximizing value} is given by $\alpha(\phi) := \sup\{\mu(\phi) :\; \mu \in \mathcal{M}_\sigma(\Sigma)\}$. Besides that,  it was showed that the Kolmogorov-Sinai entropies of the family $(\mu_t)_{t \geq 1}$ satisfy $h(\mu_\infty) = \lim_{t \to \infty}h(\mu_t) = \sup\{h(\mu) :\; \mu \in \mathcal{M}_{\max}(\phi)\}$.

We say that a map $V : S \to \mathbb{R}$ is a {\bf forward sub-action} for the potential $\phi$, when for any $b \in S$ we have $V(b) = \sup\{\phi(ab) + V(a) - \alpha(\phi) :\; a \in S,\; m_{ab} = 1\}$.  On the other hand,  the map $V^\intercal : S \to \mathbb{R}$ is a {\bf backward sub-action} for $\phi$ when any $a \in S$ satisfies $V^\intercal(a) = \sup\{\phi(ab) + V^\intercal(b) - \alpha(\phi) :\; b \in S,\; m_{ab} = 1\}$.

One of the main reasons to define the so-called sub-actions is to characterize the support of the measures belonging to $\mathcal{M}_{\max}(\phi)$,  because that kind of functions act as dual objects of the maximizing measures associated to the potential.  Actually,  given an arbitrary probability measure $\mu \in \mathcal{M}_{\max}(\phi)$ and $x \in \mathrm{supp}(\mu)$,  it is not difficult to check that any forward sub-action $V$ for the potential $\phi$ satisfies the expression $V(x_{i+1}) = \phi(x_i x_{i+1}) + V(x_i) - \alpha(\phi)$ (see for instance \cite{MR2279266}).  A similar conclusion holds for the case of a backward sub-action $V^\intercal$, that is, $V^\intercal(x_i) = \phi(x_i x_{i+1}) + V^\intercal(x_{i+1}) - \alpha(\phi)$.

Moreover,  denoting by $\mathcal{V} := \{V :\; \text{$V$ is forward sub-action for $\phi$}\}$ the {\bf set of sub-actions for $\phi$},  by the Atkinson's lemma (see \cite{MR419727} for details),  we obtain that the set of {\bf non-wandering points w.r.t. $\phi$} satisfies the expression
\begin{equation*}
\Omega(\phi) = \bigcap_{V \in \mathcal{V}}(\phi + V - V \circ \sigma)^{-1}(\{\alpha(\phi)\}) \;.
\end{equation*}

It is not difficult to check that $\Omega(\phi) \subset \mathrm{supp}(\mu)$ for any $\mu \in \mathcal{M}_{\max}(\phi)$ and the set $\Omega(\phi)$ contains the {\bf omega-limit} of any point in $\Sigma$ (see \cite{MR3701349} and \cite{MR2422016} for details).  Furthermore,  given any forward sub-action $V$,  it is easy to check that the potential $\phi' := \phi - \alpha(\phi) + V - V \circ \sigma$ satisfies the condition $\phi' \leq \alpha(\phi') = 0$.

Now we are able to present the first one of the main results of this paper which proves the existence of an optimal rate function satisfying a large deviations principle.  It is important to point out that the main results of this paper are stated assuming that $\phi$ is a summable potential with $V_1(\phi) < \infty$ and $P_G(\phi) < \infty$, accepting the condition $\alpha(\phi) = 0$.  In fact,  the last one of the conditions can be easily guaranteed replacing $\phi$ by $\phi_0 := \phi - \alpha(\phi)$,  because that new potential also satisfies $V_1(\phi_0) < \infty$ and $P_G(\phi_0) < \infty$ when $\phi$ satisfies the mentioned properties.  Therefore,  to not overload the notation,  hereafter we denote such a potential by $\phi$ as well 

\begin{theorem}
\label{LDPT}
Consider the rate function $I(x) := \inf\{F_k(x) :\; k \in \mathbb{N}\}$, where
\begin{equation*}
F_k(x) := V(x_0) + V^\intercal(x_k) + \sum_{i = 0}^{k-1}\phi(x_i x_{i+1}) \;,
\end{equation*} 
for $V$ (resp. $V^\intercal$) forward (resp. backward) sub-action for $\phi$. Then,  there is a sequence $(t_n)_{n \in \mathbb{N}}$,  with $\lim_{n \to \infty} t_n = \infty$,  such that,  for any cylinder $[\omega] \subset \Sigma$ the following limit holds
\begin{equation}
\label{LDP}
\lim_{n \to \infty} \frac{1}{t_n}\log(\mu_{t_n}([\omega])) = \sup\{I(x) :\; x \in [\omega]\} \;.
\end{equation}
\end{theorem}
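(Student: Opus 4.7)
The plan is to exploit the explicit Markov form of $\mu_t$ from (\ref{SMM}). Writing $\omega = \omega_0 \cdots \omega_n$ and substituting the definitions of $\pi_t$ and $P_t$, the $h_t^\intercal$ ratios telescope and yield
\[
\mu_t([\omega]) = h_t(\omega_0)\, h_t^\intercal(\omega_n)\, \exp\!\Bigl( t\sum_{i=0}^{n-1}\phi(\omega_i\omega_{i+1}) - n P_G(t\phi)\Bigr).
\]
Taking $\tfrac{1}{t}\log$ reduces the target limit to the asymptotic analysis of three quantities: $\tfrac{1}{t}\log h_t(a)$, $\tfrac{1}{t}\log h_t^\intercal(a)$, and $\tfrac{1}{t}P_G(t\phi)$. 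Under the normalization $\alpha(\phi)=0$, the pressure term tends to zero by the standard zero-temperature identity $\lim_{t\to\infty}\tfrac{1}{t}P_G(t\phi) = \alpha(\phi)$, while the ergodic-sum term is independent of $t$.

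The main technical step is to extract a subsequence $t_n\to\infty$ along which $\tfrac{1}{t_n}\log h_{t_n}(a) \to V(a)$ and $\tfrac{1}{t_n}\log h_{t_n}^\intercal(a) \to V^\intercal(a)$ pointwise on $S$, with the limits being a forward and a backward sub-action for $\phi$, respectively. The eigen-identities for $h_t$ and $h_t^\intercal$, divided by $e^{P_G(t\phi)}$ and then passed through $\tfrac{1}{t}\log$, become log-sum-exp relations of the form $\tfrac{1}{t}\log h_t(b) = \tfrac{1}{t}\log\sum_a e^{t(\phi(ab) + \frac{1}{t}\log h_t(a)) - P_G(t\phi)}$, which degenerate in the max-plus limit to the sub-action identities $V(b) = \sup_{a}(\phi(ab)+V(a))$ and $V^\intercal(a) = \sup_{b}(\phi(ab) + V^\intercal(b))$. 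The assumptions $V_1(\phi)<\infty$ and coercivity of $\phi$ provide uniform local comparisons of $\tfrac{1}{t}\log h_t(a)$ to a fixed reference value, enough to apply a diagonal extraction and obtain finite limits at every symbol. Substituting these into the telescoped formula gives
\[
\lim_{n\to\infty}\tfrac{1}{t_n}\log \mu_{t_n}([\omega]) = V(\omega_0) + V^\intercal(\omega_n) + \sum_{i=0}^{n-1}\phi(\omega_i\omega_{i+1}) = F_n(\omega),
\]
a value constant on $[\omega]$.

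To conclude, one must identify $F_n(\omega)$ with $\sup\{I(x) : x \in [\omega]\}$. The upper bound $\sup_{x\in[\omega]}I(x)\leq F_n(\omega)$ is immediate, since $F_n$ is constant on $[\omega]$ and $I(x)\leq F_n(x)$. For the matching lower bound, I extend $\omega$ inductively into a point $x \in [\omega]$ by starting at $x_n = \omega_n$ and choosing, at each step, $x_{n+k+1}$ that attains the supremum in the backward sub-action relation $V^\intercal(x_{n+k}) = \sup_b(\phi(x_{n+k} b) + V^\intercal(b))$; attainment on a finite set follows from the decay of $\phi$ coming from the summability/coercivity hypotheses together with finiteness of $V^\intercal$. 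Each such choice enforces $F_{n+k+1}(x) = F_{n+k}(x)$, and for $j \leq n$ the same sub-action inequality applied to $\omega$ yields $F_j(\omega)\geq F_n(\omega)$, so $I(x) = \inf_j F_j(x) = F_n(\omega)$. The step I expect to be the main obstacle is the subsequential convergence of $\tfrac{1}{t}\log h_t$ on the noncompact symbol space $S$: without a uniform global bound on $h_t$, passing the eigen-identity to its max-plus limit requires careful use of $V_1(\phi)<\infty$ together with the summability/coercivity hypotheses to rule out escape of mass to infinity and to ensure the limiting sub-actions are finite at every symbol.
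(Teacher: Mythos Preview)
Your approach mirrors the paper's for the core computation: the telescoped Markov formula for $\mu_t([\omega])$, the subsequential convergence $\frac{1}{t_n}\log h_{t_n}\to V$ and $\frac{1}{t_n}\log h_{t_n}^\intercal\to V^\intercal$ (the paper's Lemma~\ref{CSL} and Remark~\ref{BCSL}), and the identity $\frac{1}{t}P_G(t\phi)\to\alpha(\phi)=0$. Both arguments arrive at $\lim_{n\to\infty}\frac{1}{t_n}\log\mu_{t_n}([\omega_0\cdots\omega_n])=F_n(\omega)$; the paper packages this as a separate ``local'' lemma (Lemma~\ref{LLDPT}). You correctly flag the subsequential compactness of $(\frac{1}{t}\log h_t)_{t\ge1}$ on the noncompact alphabet as the main technical obstacle; the paper handles it via a uniform oscillation bound $\bigl|\frac{1}{t}\log h_t(a)-\frac{1}{t}\log h_t(b)\bigr|\le K'_\phi$ derived from Sarig's regularity estimates for the iterated Ruelle operator.

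Where you diverge is in identifying $F_n(\omega)$ with $\sup_{x\in[\omega]}I(x)$. The paper argues indirectly, combining monotonicity of $\mu_t$ under cylinder refinement with upper semi-continuity of $I$. Your route is constructive: extend $\omega$ to a point $x$ by choosing backward-optimal continuations so that $F_{n+k}(x)=F_n(\omega)$ for all $k\ge0$, and check $F_j\ge F_n$ for $j\le n$ directly from the backward sub-action inequality. This is more transparent and in fact avoids a delicate point in the paper's semi-continuity argument.

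One point needs repair. You justify attainment of the supremum in $V^\intercal(a)=\sup_b\bigl(\phi(ab)+V^\intercal(b)\bigr)$ by ``decay of $\phi$ from summability/coercivity''. But those hypotheses control $\sup_b\phi(ab)$ as $a\to\infty$, not $\phi(ab)$ as $b\to\infty$ for fixed $a$; indeed $V_1(\phi)<\infty$ forces $b\mapsto\phi(ab)$ to be \emph{bounded} for each fixed $a$. Attainment would thus require $V^\intercal(b)\to-\infty$, which you have not established and which need not hold. The fix is routine: at step $k$ choose $x_{n+k+1}$ with $\phi(x_{n+k}x_{n+k+1})+V^\intercal(x_{n+k+1})\ge V^\intercal(x_{n+k})-\epsilon\,2^{-k}$; then $F_j(x)\ge F_n(\omega)-2\epsilon$ for all $j\ge n$, hence $I(x)\ge F_n(\omega)-2\epsilon$, and letting $\epsilon\to0$ gives $\sup_{x\in[\omega]}I(x)\ge F_n(\omega)$.
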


Now assume that $\Sigma$ has incidence matrix $M$, we call {\bf transpose shift} to the set $\Sigma^\intercal := \{x \in S^{-\mathbb{N}} :\; m_{x_{i-1}x_i} = 1\; \forall i \in -\mathbb{N}\}$ with the {\bf transpose shift map} $\sigma^\intercal((x_n)_{n \in -\mathbb{N}}):= (x_{n-1})_{n \in -\mathbb{N}}$ acting on it.  In fact, the matrix $M$ also induce the {\bf bilateral Markov shift} given by $\widehat{\Sigma} := \{x \in S^\mathbb{Z} :\; m_{x_i x_{i+1}} = 1\}$,  with the {\bf bilateral shift map} $\widehat{\sigma}((...\; x_{-1} | x_0 x_1\; ...)) := (...\; x_0 | x_1 x_2\; ...)$. In this case, we also assume that both of the sets, $\Sigma^\intercal$ and $\widehat{\Sigma}$, are equipped with the {\bf product topology} and them result in metric spaces as well.  

Given a potential $\psi : \Sigma \to \mathbb{R}$, we call $\widehat{\psi} : \widehat{\Sigma} \to \mathbb{R}$ the {\bf natural extension} of $\psi$, when $\widehat{\psi}(x) = \psi(x_0\; ...\; x_n\; ...)$ for each $x \in \widehat{\Sigma}$. Using the above, we say that a function $W : \widehat{\Sigma} \to \mathbb{R}$ is an {\bf involution kernel} for $\psi$, when the {\bf transpose potential} $\psi^\intercal : \widehat{\Sigma} \to \mathbb{R}$ given by
\begin{equation*}
	\psi^\intercal := \widehat{\psi} \circ \widehat{\sigma}^{-1} + W \circ \widehat{\sigma}^{-1} - W \;, 
\end{equation*} 
satisfies the expression $\psi^\intercal(x) = \psi^\intercal(...\; x_{-2}x_{-1})$ for each $x \in \widehat{\Sigma}$. In other words, the map $\psi^\intercal$ is completely characterized on the subshift $\Sigma^\intercal$. Moreover, note that the former expression is equivalent to have
\begin{equation}
	\label{IK}
	(\psi^\intercal + W) = (\widehat{\psi} + W) \circ \widehat{\sigma}^{-1} \;.
\end{equation}

Define $\widehat{\mu_t} \in \mathcal{M}_{\widehat{\sigma}}(\widehat{\Sigma})$ as the probability measure assigning to each cylinder $[\omega]_k := \{x \in \widehat{\Sigma} :\; x_k = \omega_0, ... , x_{k+n} = \omega_n \}$, with $n+1 = |\omega|$ and $k \in \mathbb{Z}$, the value $\widehat{\mu}_t([\omega]_k) := \mu_t([\omega_0\; ...\; \omega_n])$.  In the next theorem, we show existence of a rate function $\widehat{I} : \widehat{\Sigma} \to \mathbb{R}$ satisfying a large deviation principle on the bilateral shift $\widehat{\Sigma}$ for the family of measures $(\widehat{\mu}_t)_{t \geq 1}$.  Furthermore,  we prove that $\widehat{I}$ is a natural extension of the rate function $I$ stated in Theorem \ref{LDPT}.  The statement of the result is such as follows.

\begin{theorem}
\label{BLDPT}
Consider the rate function $I$ stated in Theorem \ref{LDPT} and assume that $I(x) > -\infty$ for any $x \in \Sigma$. Then, there is an involution kernel $W$ for the function $\phi$ such that the natural extension $\widehat{I}$ satisfies the expression 
\begin{equation*}
\widehat{I}(x) = V(x_0) + V^\intercal(x_{-1}) - W(x_{-1}x_0) + \sum_{i \in \mathbb{N}_0} (V^\intercal(x_i) - V^\intercal(x_{i-1}) + \phi^\intercal(x_{i-1}x_i)) \;,
\end{equation*}
for $V$ (resp. $V^\intercal$) a forward (resp. backward) sub-action for $\phi$.  Besides that, there is a sequence $(t_n)_{n \in \mathbb{N}}$,  with $\lim_{n \to \infty} t_n = \infty$,  such that,  for any cylinder $[\omega] \subset \widehat{\Sigma}$ the following limit holds true
\begin{equation}
\label{BLDP}
\lim_{n \to \infty} \frac{1}{t_n}\log(\widehat{\mu}_{t_n}([\omega])) = \sup\{\widehat{I}(x) :\; x \in [\omega]\} \;.
\end{equation}
\end{theorem}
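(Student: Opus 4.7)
The plan has three steps. First, I construct an explicit involution kernel for the Markov potential $\phi$. Since $\phi(x)=\phi(x_0 x_1)$, I make the ansatz $W(x)=w(x_{-1},x_0)$ depending on only two coordinates and substitute into the defining identity (\ref{IK}). The requirement that $\phi^\intercal$ depend only on negative coordinates forces $w(a,b)-\phi(ab)$ to be a function of $a$ alone; the simplest admissible choice is $W(x):=\phi(x_{-1}x_0)$, which yields $\phi^\intercal(x)=\phi(x_{-2}x_{-1})$. The first variation bound $V_1(\phi)<\infty$ gives the required regularity of $W$ on $\widehat{\Sigma}$.

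Second, I identify the candidate $\widehat{I}$ with the natural extension of $I$ via telescoping. Writing $\Lambda_i:=V^\intercal(x_i)-V^\intercal(x_{i-1})+\phi^\intercal(x_{i-1}x_i)$, the choice of $W$ above lets me evaluate $\Lambda_0=V^\intercal(x_0)-V^\intercal(x_{-1})+\phi(x_{-1}x_0)$ and, for $i\ge 1$, $\Lambda_i=\phi(x_{i-1}x_i)+V^\intercal(x_i)-V^\intercal(x_{i-1})=\Delta_{i-1}$, where $\Delta_j:=\phi(x_jx_{j+1})+V^\intercal(x_{j+1})-V^\intercal(x_j)\le 0$ are precisely the non-positive increments from the derivation of $I$ in Theorem \ref{LDPT} (namely $F_k=V(x_0)+V^\intercal(x_0)+\sum_{j<k}\Delta_j$). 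The boundary term $V(x_0)+V^\intercal(x_{-1})-W(x_{-1}x_0)=V(x_0)+V^\intercal(x_{-1})-\phi(x_{-1}x_0)$ cancels exactly $\Lambda_0-V^\intercal(x_0)$, so the sum in the stated formula collapses to $\widehat{I}(x)=V(x_0)+V^\intercal(x_0)+\sum_{j\ge 0}\Delta_j=I(x_0x_1\dots)$. The standing hypothesis $I>-\infty$ on $\Sigma$ guarantees the series converges and the rearrangement is justified.

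Third, I transfer the LDP. By construction, $\widehat{\mu}_t$ is $\widehat{\sigma}$-invariant and satisfies $\widehat{\mu}_t([\omega]_k)=\mu_t([\omega_0\dots\omega_n])$, so any bilateral cylinder has the same measure as a corresponding forward cylinder in $\Sigma$. Applying Theorem \ref{LDPT} with the same sequence $(t_n)$ produced there immediately gives the limit on the left-hand side of (\ref{BLDP}). The factorization $\widehat{I}(x)=I(x_0x_1\dots)$ from Step 2 implies $\sup\{\widehat{I}(x):x\in[\omega]_k\}=\sup\{I(y):y\in[\omega_0\dots\omega_n]\}$ after the relevant shift, since the supremum over bilateral extensions is unconstrained on the negative coordinates and $\widehat{I}$ does not depend on them.

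The main obstacle is Step 2: the telescoping argument must be carried out carefully because the partial sums of $\Delta_j$ can wander to $-\infty$, and one must argue that the infinite sum in the definition of $\widehat{I}$ is unambiguously well-defined (as a possibly $-\infty$-valued limit of non-increasing partial sums). This is where the assumption $I(x)>-\infty$ and the coercivity/summability of $\phi$ are genuinely used. A secondary subtlety is reconciling the notational convention of $\phi^\intercal$ in the statement with the involution-kernel definition; the construction above makes this precise via the identification $\phi^\intercal(ab)=\phi(ab)$ induced by the choice $W(ab)=\phi(ab)$, and a different but cohomologous choice of $W$ would produce an equivalent expression for $\widehat{I}$ differing by a coboundary.
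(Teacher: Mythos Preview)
Your argument is correct and in fact more direct than the paper's. Both proofs use the same explicit involution kernel $W(x)=\phi(x_{-1}x_0)$ (the paper isolates this as Lemma~\ref{IKL}), but they diverge at the identification of $\widehat{I}$ with the natural extension of $I$. The paper first invokes Proposition~\ref{IS} to rewrite $I$ as the series $\sum_{i\ge 0}(V(x_i)-V(x_{i+1})+\phi(x_ix_{i+1}))$, then substitutes $\phi(x_ix_{i+1})=\phi^\intercal(x_{i-1}x_i)+W(x_ix_{i+1})-W(x_{i-1}x_i)$ from the involution relation, telescopes, and is left with a residual term $\lim_{k\to\infty}(W(x_{k-1}x_k)-V^\intercal(x_{k-1})-V(x_k))$ that must be shown to vanish using the $\Omega(\phi)$ machinery and the identity $\sup_a(V(a)+V^\intercal(a))=0$. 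Your route bypasses all of this: you evaluate the stated formula directly with $W=\phi$ and $\phi^\intercal=\phi$, telescope in $V^\intercal$ rather than $V$, and land on $V(x_0)+V^\intercal(x_0)+\sum_{j\ge 0}\Delta_j$, which equals $\inf_k F_k=I$ immediately from the monotonicity $\Delta_j\le 0$ granted by the backward sub-action inequality. No appeal to Proposition~\ref{IS}, to $\Omega(\phi)$, or to the vanishing of any residual limit is needed. The trade-off is that the paper's detour yields, as a by-product, the independent representation of $I$ in terms of the forward sub-action $V$ alone (Proposition~\ref{IS}); your streamlined argument reaches the theorem faster but does not produce that intermediate result. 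Your Step~3, transferring the large deviations principle via $\widehat{\mu}_t([\omega]_k)=\mu_t([\omega_0\dots\omega_n])$ and the factorisation $\widehat{I}(x)=I(x_0x_1\dots)$, is also explicit where the paper leaves it implicit.
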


\section{Large deviations principle}
\label{LDP-section}

In this section we prove that the family of stationary Markov equilibrium states $(\mu_t)_{t \geq 1}$ satisfies a large deviations principle such as appear stated in Theorem \ref{LDPT}.  In order to do that,  at first we need to prove some technical lemmas.  In fact,  for ease of computation we will use the notation $A = B \pm C$ when $B - C \leq A \leq B + C$ and $A = B^{\pm C}$ for $B^{-C} \leq A \leq B^C$.

\begin{lemma}
\label{L1L}
Consider a family of continuous functions $(f_t)_{t \geq 1}$, with $f_t : S \to \mathbb{R}$, converging in the norm $\| \cdot \|_\infty$ to a function $f : S \to \mathbb{R}$ and assume that $\sum_{a \in S} e^{tf_t(a)} < \infty$ for each $t \geq 1$. Then, the following expression holds
\begin{equation*}
\lim_{t \to \infty} \frac{1}{t}\log\Bigl( \sum_{a \in S} e^{tf_t(a)} \Big) = \sup\{f(a) :\; a \in S\} \;.
\end{equation*}
\end{lemma}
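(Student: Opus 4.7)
The plan is to sandwich $\tfrac{1}{t}\log\sum_{a\in S}e^{tf_t(a)}$ between matching bounds that both converge to $M := \sup_{a\in S} f(a)$. I would set $\epsilon_t := \|f_t - f\|_\infty$, so that $\epsilon_t \to 0$ by hypothesis, and first observe that $M<\infty$: summability of $\sum_a e^{f_1(a)}$ forces $\sup_a f_1(a)<\infty$, and uniform convergence gives $\sup_a f(a) \leq \sup_a f_1(a)+\epsilon_1 < \infty$.

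For the lower bound I would restrict the sum to a single term. For any $a_0\in S$,
\begin{equation*}
\frac{1}{t}\log\sum_{a\in S} e^{tf_t(a)} \;\geq\; f_t(a_0) \;\geq\; f(a_0) - \epsilon_t,
\end{equation*}
so taking $\liminf_{t\to\infty}$ yields $\liminf \geq f(a_0)$, and then taking the supremum over $a_0$ gives $\liminf \geq M$.

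For the upper bound I would exploit uniform convergence in the other direction, $f_t \leq f+\epsilon_t$, to obtain $\sum_{a} e^{tf_t(a)} \leq e^{t\epsilon_t}\sum_{a} e^{tf(a)}$. The remaining sum I would control by bootstrapping from $t=1$: since $\sum_a e^{f(a)} \leq e^{\epsilon_1}\sum_a e^{f_1(a)}<\infty$, and $f(a)\leq M$ for every $a$, I get
\begin{equation*}
\sum_{a\in S} e^{tf(a)} \;=\; \sum_{a\in S} e^{(t-1)f(a)}\,e^{f(a)} \;\leq\; e^{(t-1)M}\sum_{a\in S} e^{f(a)}.
\end{equation*}
Combining the two estimates and taking $\tfrac{1}{t}\log$,
\begin{equation*}
\frac{1}{t}\log\sum_{a\in S} e^{tf_t(a)} \;\leq\; \epsilon_t + \frac{t-1}{t}M + \frac{1}{t}\log\!\sum_{a\in S} e^{f(a)} \;\longrightarrow\; M,
\end{equation*}
which gives $\limsup \leq M$, and together with the lower bound concludes the proof.

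The step I expect to require the most care is the bootstrap bound $\sum_{a}e^{tf(a)}<\infty$: the hypothesis provides summability of $\sum_a e^{tf_t(a)}$ at each $t$, but the natural upper-bound argument needs the exponential of $tf$, not $tf_t$. The way around this is to apply the hypothesis only at $t=1$, transfer the finiteness to $f$ via the uniform bound $\epsilon_1$, and then absorb the growing factor $e^{(t-1)M}$ using $f\leq M<\infty$; the factor disappears under the normalization $\tfrac{1}{t}\log$. The rest of the argument is essentially a log-sum-exp exercise.
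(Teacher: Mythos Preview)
Your proof is correct and follows essentially the same route as the paper: both reduce from $f_t$ to the fixed function $f$ via the uniform bound $\|f_t-f\|_\infty$, and then invoke the Laplace-type limit $\tfrac{1}{t}\log\sum_a e^{tf(a)}\to\sup_a f(a)$. The only difference is that the paper treats this last limit as ``well-known'' and stops there, whereas you supply a self-contained argument for it via the single-term lower bound and the bootstrap estimate $\sum_a e^{tf(a)}\le e^{(t-1)M}\sum_a e^{f(a)}$; your explicit verification that $M<\infty$ and $\sum_a e^{f(a)}<\infty$ is a nice touch that the paper leaves implicit.
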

\begin{proof}
Given $\epsilon \in (0, 1)$, there exists $t_0 \geq 1$ such that $\|f - f_t\|_\infty < \epsilon$ for each $t \geq t_0$. By the above, we have
\begin{align*}
\sum_{a \in S}e^{tf(a)}
= \sum_{a \in S}e^{t(f(a) - f_t(a) + f_t(a))} 
= \sum_{a \in S}e^{t(f_t(a) \pm \epsilon)}
= e^{\pm t\epsilon} \sum_{a \in S}e^{tf_t(a)} \;.
\end{align*}

By the former expression, it follows that 
\begin{equation*}
\frac{1}{t}\log\Bigl( \sum_{a \in S} e^{tf(a)} \Big) = \epsilon \pm \frac{1}{t}\log\Bigl( \sum_{a \in S} e^{tf_t(a)} \Big) \;.
\end{equation*}

In particular, the former assumption implies that $\sum_{a \in S} e^{tf(a)} < \infty$ for each $t \geq t_0$. Besides that, it is well-known that 
\begin{equation*}
\lim_{t \to \infty} \frac{1}{t}\log\Bigl( \sum_{a \in S} e^{tf(a)} \Big) =  \sup\{f(a) :\; a \in S\}\;.
\end{equation*}

Then, taking the limit when $\epsilon$ goes to $0$, we prove our claim.
\end{proof}

\subsection{The unilateral CMS approach}
\label{ULDP-section}

Our goal throughout this section is to prove a large deviations principle in the setting of unilateral countable Markov shifts.  At first,  we show existence of a forward sub-action $V$ satisfying the conditions that appear stated in Theorem \ref{LDPT}.  Later we prove a local version of the large deviations principle in the context that we are interested in and at the end of the section we present the proof of Theorem \ref{LDPT}.

\begin{lemma}
\label{CSL}
There is $(t_n)_{n \in \mathbb{N}}$, with $\lim_{n \to \infty} t_n = \infty$,  such that,  the sequence of functions $(\frac{1}{t_n}\log(h_{t_n}))_{n \in \mathbb{N}}$ is pointwise convergent on $\Sigma$ to a map $V$.  Furthermore,  the limit function $V$ results in a forward sub-action for $\phi$. 
\end{lemma}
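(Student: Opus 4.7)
The plan is to start from the Ruelle eigenfunction equation $L_{t\phi} h_t = e^{P_G(t\phi)} h_t$, normalize the eigenfunctions, extract a pointwise convergent subsequence, and then pass to the limit in the logarithmic form of the equation to recover the sub-action identity. First I fix a reference state $a_0 \in S$ and, using that $h_t$ is determined only up to a positive multiplicative constant, rescale so that $h_t(a_0) = 1$ for every $t \geq 1$. Setting $V_t(a) := \frac{1}{t}\log h_t(a)$, so that $V_t(a_0) = 0$, the eigenfunction equation takes the logarithmic form
\begin{equation*}
\frac{1}{t}\log \sum_{\substack{a \in S \\ m_{ab} = 1}} e^{t(\phi(ab) + V_t(a))} \;=\; \frac{P_G(t\phi)}{t} + V_t(b), \qquad b \in S.
\end{equation*}
The standing assumption $\alpha(\phi) = 0$ together with the boundedness of $h(\mu_t)$ recalled in Section \ref{mr-section} give $P_G(t\phi)/t \to 0$, so the first term on the right will vanish in the limit.

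Next I establish a priori bounds on $V_t(a)$ that are uniform in $t$ for each fixed $a \in S$. By topological transitivity there exist $n, m \in \mathbb{N}$ and admissible paths from $a_0$ to $a$ (of length $n$) and from $a$ to $a_0$ (of length $m$). Iterating the eigenfunction identity as $L_{t\phi}^k h_t = e^{k P_G(t\phi)} h_t$ and keeping only the term corresponding to the chosen path yields two-sided inequalities of the form
\begin{equation*}
S_n \phi(a_0 \cdots a) - \frac{n P_G(t\phi)}{t} \;\leq\; V_t(a) \;\leq\; \frac{m P_G(t\phi)}{t} - S_m \phi(a \cdots a_0).
\end{equation*}
Since $P_G(t\phi)/t$ is bounded, $\{V_t(a)\}_{t \geq 1}$ is bounded for each $a$, and a Cantor diagonal extraction over the countable set $S$ produces a sequence $t_n \to \infty$ along which $V_{t_n}(a) \to V(a)$ pointwise on $S$.

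To conclude that $V$ is a forward sub-action, I pass to the limit in the logged eigenfunction equation. The right-hand side converges to $V(b)$. For the left-hand side I aim to apply Lemma \ref{L1L} with $f_{t_n}(a) := \phi(ab) + V_{t_n}(a)$ restricted to $\{a :\; m_{ab}=1\}$, whose pointwise limit is $f(a) := \phi(ab) + V(a)$. Since Lemma \ref{L1L} is stated with uniform convergence, I would combine the summability of $\phi$, the assumption $V_1(\phi) < \infty$, and the uniform upper bound on $V_t$ from the previous step to show that the tail of the sum (over $a$ outside a large finite subset of $S$) contributes negligibly, uniformly in $t$, reducing the asymptotics to a finite sub-alphabet where pointwise convergence is enough. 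The resulting identity $V(b) = \sup\{\phi(ab) + V(a) :\; m_{ab} = 1\}$, combined with $\alpha(\phi) = 0$, is exactly the definition of a forward sub-action.

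The main obstacle is precisely this tail control on the infinite alphabet. The finite-alphabet case is routine, but one must show that $\phi(ab) + V_t(a) \to -\infty$ as $a \to \infty$ at a rate independent of $t \geq 1$; this is where summability and coercivity of $\phi$ interact with the a priori upper bound on $V_t$ derived from transitivity. Once this uniform tail estimate is in place, Lemma \ref{L1L} closes the argument and the sub-action identity follows.
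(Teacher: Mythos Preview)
Your strategy---normalize $h_t$, rewrite the eigenfunction equation in logarithmic form, extract a subsequence, and pass to the limit via Lemma~\ref{L1L}---is the same endgame as the paper's. The divergence is in how you obtain the a~priori control on $V_t=\frac{1}{t}\log h_t$. The paper invokes a Sarig-type oscillation estimate to claim
\[
\Bigl|\tfrac{1}{t}\log h_t(x_0)-\tfrac{1}{t}\log h_t(y_0)\Bigr|\le K'_\phi \qquad\text{for all }x_0,y_0\in S,
\]
i.e.\ a bound that is uniform \emph{over the alphabet}, which (after your normalization $h_t(a_0)=1$) makes $V_t$ uniformly bounded on all of $S$. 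Your transitivity-path argument gives instead, for each fixed $a$, a bound depending on the path length $m(a)$ and on $-S_{m(a)}\phi$ along the chosen path; this is uniform in $t$ but \emph{not} uniform in $a$ unless one controls how $m(a)$ and the ergodic sums grow.

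This is exactly where your identified ``main obstacle'' bites. For the tail control you need $\phi(ab)+V_t(a)\to-\infty$ as $a\to\infty$ \emph{uniformly in $t$}. With the paper's uniform bound $\sup_a|V_t(a)|\le K'_\phi$ this is immediate from coercivity of $\phi$. With only your per-symbol bound it is not: the upper bound $V_t(a)\le m(a)P_G(t\phi)/t-S_{m(a)}\phi(a\cdots a_0)$ can grow with $a$ (since $\phi$ is unbounded below and $m(a)$ is not controlled in a merely transitive shift), and routing through $b$ via $a\to b\to\cdots\to a_0$ yields a bound on $\phi(ab)+V_t(a)$ that is uniform but constant, not decaying. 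So the sketch ``summability $+$ coercivity $+$ the upper bound from transitivity'' does not close the argument as written.

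To salvage your route you essentially need to recover the alphabet-uniform bound the paper imports. One way: show directly that for the Markov potential the ratio $h_t(a)/h_t(a_0)$ is controlled by $e^{\pm t K'}$ with $K'$ independent of $a$ (this is what the Sarig estimate, combined with the construction of $h_t$ as a limit of Ces\`aro averages of $e^{-kP_G(t\phi)}L_{t\phi}^k\mathbf{1}$, is meant to deliver). Once $\sup_a|V_t(a)|$ is bounded uniformly in $t$, your tail argument goes through and Lemma~\ref{L1L} applies on each finite sub-alphabet exactly as you describe. The paper glosses over the passage from pointwise to uniform convergence in invoking Lemma~\ref{L1L}; you are right to flag it, but the missing ingredient is the uniform-over-$S$ bound, not a separate tail estimate.
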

\begin{proof}
In \cite{MR1738951} was proven existence of a constant $K_\phi$ such that any pair $x, y \in \Sigma$ and each $a \in S$ satisfy the expression
\begin{equation*}
\bigl| e^{-nP_G(\phi)}(L^n_\phi {\bf 1}_{[a]})(x_0) - e^{-nP_G(\phi)}(L^n_\phi {\bf 1}_{[a]})(y_0) \bigr|
\leq K_\phi \;.
\end{equation*}

By the above, it follows that the sequence $\bigl( \frac{1}{n}\sum_{k=0}^{n-1}e^{-kP_G(\phi)}L^k_\phi {\bf 1}_{[a]} \bigr)_{n \in \mathbb{N}}$ is equicontinuous and uniformly bounded for each $a \in S$. Therefore, for each $x \in \Sigma$ we have $\lim_{l \to \infty}  \frac{1}{n_l}\sum_{k=0}^{n_l-1}e^{-kP_G(\phi)}(L^k_\phi {\bf 1})(x_0) = h_1(x_0)$.

Further,  by the former expressions we are able to guarantee the existence of another constant $K'_\phi$ such that for any $t \geq 1$ and each pair $x, y \in \Sigma$,  the following expression holds true
\begin{equation*}
\Bigl| \frac{1}{t}\log(h_t(x_0)) - \frac{1}{t}\log(h_t(y_0)) \Bigr| \leq K'_\phi \;. 
\end{equation*}  

The above guarantees that the family $(\frac{1}{t}\log(h_t))_{t \geq 1}$ is equicontinuous and uniformly bounded. Therefore, there is a sequence $(t_n)_{n \in \mathbb{N}}$, such that each $x \in \Sigma$ satisfies $\lim_{n \to \infty} \frac{1}{t_n}\log(h_{t_n}(x_0)) = V(x_0)$.

On the other hand,  by the variational principle in the setting of countable Markov shifts (see for instance \cite{MR1738951}),  it follows that
\begin{equation*}
\frac{P_G(t\phi)}{t} = \frac{h(\mu_t)}{t} + \mu_t(\phi) \;, 
\end{equation*}
for each $t \geq 1$.  Besides that,  since the family of entropies $(h(\mu_t))_{t \geq 1}$ is decreasing (see \cite{MR3864383} for details),  taking the limit when $t$ goes to $\infty$,  we obtain $\lim_{t \to \infty}\frac{P_G(t\phi)}{t} = \alpha(\phi) = 0$.

So,  defining $\widetilde{\phi} : S \times S \to \mathbb{R} \cup \{-\infty\}$ by
\begin{equation*}
\widetilde{\phi}(ab) :=
\begin{cases}
\phi(ab) &, m_{ab} = 1 \;; \\
-\infty &, m_{ab} = 0 \;.
\end{cases}
\end{equation*}

Since $L_{t\phi}h_t = e^{P_G(t\phi)}h_t$ for any $t \geq 1$, given $x \in \Sigma$ we obtain
\begin{align*}
\frac{P_G(t\phi)}{t} 
&= \frac{1}{t}\log(L_{t\phi} h_t)(x_0) - \frac{1}{t}\log(h_t(x_0)) \\
&= \frac{1}{t}\log\Bigl( \sum_{a \in S} e^{t(\widetilde{\phi}(ax_0) + \frac{1}{t}\log(h_t(a)))} \Big) - \frac{1}{t}\log(h_t(x_0))
\end{align*}

Then, by Lemma \ref{L1L}, it follows that
\begin{align*}
0 = \alpha(\phi) = \lim_{n \to \infty} \frac{P_G(t_n\phi)}{t_n} 
&=\sup\{\widetilde{\phi}(ax_0) + V(a) :\; a \in S\} - V(x_0) \\
&=\sup\{\phi(ax_0) + V(a) :\; a \in S,\; m_{ax_0} = 1\} - V(x_0) \;.
\end{align*}

That is,  the function $V$ results in a forward sub-action for $\phi$.
\end{proof}

\begin{remark}
\label{BCSL}
Since $L^\intercal_{t\phi} h^\intercal_t = e^{P_G(t\phi)} h^\intercal_t$ for each $t \geq 1$, following a similar procedure to the one in Lemma \ref{CSL}, it is not difficult to check that there is a sequence $(t_n)_{n \in \mathbb{N}}$ (which can be assumed as the same one stated in the lemma),  such that the sequence $(\frac{1}{t_n}\log(h^\intercal_{t_n}))_{n \in \mathbb{N}}$ converges on cylinders with the norm $\| \cdot \|_\infty$ and pointwise on $\Sigma$ to a map $V^\intercal$. Moreover, in this case the limit function $V^\intercal$ results in a backward sub-action.
\end{remark}

To prove the first one of the main results of this paper, we approximate the rate map $I$ stated in the theorem by a sequence of locally constant maps satisfying a local large deviations principle.  

It is important to remember that we are considering $\phi$ as a summable potential such that $\alpha(\phi) = 0$,  $V_1(\phi) < \infty$ and $P_G(\phi) < \infty$. The statement of the lemma is the following one.

\begin{lemma}
\label{LLDPT}
There is a sequence $(t_n)_{n \in \mathbb{N}}$ such that for each $k \in \mathbb{N}$ and any cylinder $[\omega_0\; ...\; \omega_k] \subset \Sigma$ the following limit holds true
\begin{equation}
\label{LLDP}
\lim_{n \to \infty} \frac{1}{t_n}\log(\mu_{t_n}([\omega_0\; ...\; \omega_k])) = \sup\{F_k(x) :\; x \in [\omega_0\; ...\; \omega_k]\} \;,
\end{equation}
where $F_k(x)$ satisfies the expression stated in Theorem \ref{LDPT} for $V$ (resp. $V^\intercal$) given by Lemma \ref{CSL} (resp. Remark \ref{BCSL}).
\end{lemma}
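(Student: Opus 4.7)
The plan is to unwind the stationary Markov formula \eqref{SMM} for $\mu_t$ and let the telescoping collapse the product. Writing
\[
\mu_t([\omega_0\; \ldots\; \omega_k]) = h_t(\omega_0)\, h_t^\intercal(\omega_0) \prod_{i=0}^{k-1} \frac{h_t^\intercal(\omega_{i+1})}{h_t^\intercal(\omega_i)}\, e^{t\phi(\omega_i \omega_{i+1}) - P_G(t\phi)},
\]
the ratios of $h_t^\intercal$ telescope and cancel the boundary factor $h_t^\intercal(\omega_0)$ coming from $\pi_t(\omega_0)$, leaving
\[
\mu_t([\omega_0\; \ldots\; \omega_k]) = h_t(\omega_0)\, h_t^\intercal(\omega_k)\, \exp\!\Bigl( t \sum_{i=0}^{k-1}\phi(\omega_i \omega_{i+1}) - k\, P_G(t\phi) \Bigr).
\]

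Next I would take logarithms, divide by $t$, and pass to the limit along the sequence $(t_n)_{n \in \mathbb{N}}$ supplied by Lemma \ref{CSL} (one may assume the same sequence also realizes Remark \ref{BCSL}, by a diagonal extraction if necessary). Each piece then has an identified limit: $\frac{1}{t_n}\log h_{t_n}(\omega_0) \to V(\omega_0)$ by Lemma \ref{CSL}, $\frac{1}{t_n}\log h_{t_n}^\intercal(\omega_k) \to V^\intercal(\omega_k)$ by Remark \ref{BCSL}, and $\frac{k}{t_n}P_G(t_n \phi) \to k\, \alpha(\phi) = 0$ by the variational identity shown inside the proof of Lemma \ref{CSL}. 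The finite sum $\sum_{i=0}^{k-1} \phi(\omega_i \omega_{i+1})$ is independent of $t$ and therefore unchanged. Adding the contributions yields
\[
\lim_{n \to \infty} \frac{1}{t_n}\log\bigl(\mu_{t_n}([\omega_0\; \ldots\; \omega_k])\bigr) = V(\omega_0) + V^\intercal(\omega_k) + \sum_{i=0}^{k-1}\phi(\omega_i \omega_{i+1}).
\]

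Finally, because $\phi$ is a Markov potential, $F_k(x) = V(x_0) + V^\intercal(x_k) + \sum_{i=0}^{k-1}\phi(x_i x_{i+1})$ depends only on $x_0, \ldots, x_k$, so $F_k$ is constant on the cylinder $[\omega_0\; \ldots\; \omega_k]$ and equals the right-hand side of the display above. Hence the limit coincides with $\sup\{F_k(x) : x \in [\omega_0\; \ldots\; \omega_k]\}$, which is \eqref{LLDP}. The argument is essentially bookkeeping once the Markov factorization is used; the only genuine ingredients are the telescoping structure of the $P_t(\omega_i \omega_{i+1})$ (which is exactly what makes the local rate split into boundary sub-action terms plus an ergodic sum) and the fact that the subsequences giving $V$ and $V^\intercal$ can be taken compatible. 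There is no substantive obstacle beyond these two points.
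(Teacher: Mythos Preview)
Your argument is correct and uses the same core ingredients as the paper: the stationary Markov factorization \eqref{SMM} collapses to the product formula $\mu_t([\omega_0\ldots\omega_k]) = h_t(\omega_0)\,h_t^\intercal(\omega_k)\,e^{\sum_{i=0}^{k-1}(t\phi(\omega_i\omega_{i+1})-P_G(t\phi))}$, and then Lemma~\ref{CSL}, Remark~\ref{BCSL}, and $\frac{1}{t}P_G(t\phi)\to\alpha(\phi)=0$ identify the limit of each factor.

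Where you diverge from the paper is in the final step. The paper bounds $\frac{1}{t}\log\mu_t([\omega_0\ldots\omega_k])$ above by $\sup\{F_{k,t}(x):x\in[\omega_0\ldots\omega_k]\}$ and below by the corresponding infimum, and then runs a separate $\limsup$/$\liminf$ argument involving a continuity--monotonicity sub-cylinder trick to squeeze the two bounds together. You instead observe directly that, since $V,V^\intercal$ are functions on $S$ and $\phi$ is Markov, $F_k$ depends only on the coordinates $x_0,\ldots,x_k$ and is therefore \emph{constant} on $[\omega_0\ldots\omega_k]$; hence $\frac{1}{t_n}\log\mu_{t_n}([\omega_0\ldots\omega_k])$ is literally equal to $F_{k,t_n}$ evaluated anywhere in the cylinder, and the supremum on the right of \eqref{LLDP} is just that constant value. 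This makes the paper's $\sup/\inf$ sandwich and the $x'$-sub-cylinder construction entirely superfluous for this lemma. Your route is shorter and conceptually cleaner; the paper's more elaborate argument would only be needed if $F_k$ genuinely varied over the cylinder, which it does not in the Markov-potential setting assumed throughout.
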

\begin{proof}
Given $k \in \mathbb{N}$ and $t \geq 1$, consider the functions $F_{k, t} : \Sigma \to \mathbb{R}$ given by the expression
\begin{equation*}
F_{k, t}(x) := \frac{1}{t}\log(h_t(x_0)) + \frac{1}{t}\log(h^\intercal_t(x_k)) + \sum_{i = 0}^{k-1}\Bigl(\phi(x_i x_{i+1}) - \frac{P_G(t\phi)}{t}\Bigr)\;.
\end{equation*}  

Then,  by Lemma \ref{CSL},  there exists a sequence $(t_n)_{n \in \mathbb{N}}$,  with $\lim_{n \to \infty} t_n = \infty$,  such that,  the sequence of functions $(F_{k, t_n})_{n \in \mathbb{N}}$ converges on cylinders in the norm $\| \cdot \|$ and pointwise on $\Sigma$ to a map $F_k$.  Moreover, for each $x \in \Sigma$ the following pointwise limit holds
\begin{equation*}
\lim_{n \to \infty} F_{k, t_n}(x) = V(x_0) + V^\intercal(x_k) + \sum_{i = 0}^{k-1}\phi(x_i x_{i+1}) =: F_k(x) \;.
\end{equation*}

On the other hand, by \eqref{SMM}, for any $t \geq 1$,  each cylinder $[\omega_0\; ...\; \omega_k]$ and any $x \in [\omega_0\; ...\; \omega_k]$,  we have the following expression
\begin{equation}
\label{SMM2}
\mu_t([\omega_0\; ...\; \omega_k]) = h_t(\omega_0)h^\intercal_t(\omega_k)e^{\sum_{i = 0}^{k-1}(t\phi(\omega_i \omega_{i+1}) - P_G(t\phi))} \;.
\end{equation}

Therefore, taking $\log$ on both sides of the former expression and multiplying by $\frac{1}{t}$, we obtain
\begin{align*}
\frac{1}{t}\log(\mu_t([\omega_0\; ...\; \omega_k])) 
&= \frac{1}{t}\log(h_t(\omega_0)) + \frac{1}{t}\log(h_t(\omega_k)) + \sum_{i = 0}^{k-1}\Bigl(\phi(\omega_i \omega_{i+1}) - \frac{P_G(t\phi)}{t}\Bigr) \\
&\leq \sup\{F_{k, t}(x) :\; x \in [\omega_0\; ...\; \omega_k]\} \;.
\end{align*}

So,  taking the limit when $n$ goes to $\infty$ on the sequence $(t_n)_{n \in \mathbb{N}}$, it follows that
\begin{equation*}
\limsup_{n \to \infty} \frac{1}{t_n}\log(\mu_{t_n}([\omega_0\; ...\; \omega_k])) \leq \sup\{F_k(x) :\; x \in [\omega_0\; ...\; \omega_k]\} \;.
\end{equation*}

On the other hand, by the definition of supremum, for any $\epsilon > 0$, there exists $x' \in [\omega_0\; ...\; \omega_k]$ such that
\begin{equation*}
\sup\{F_k(x) :\; x \in [\omega_0\; ...\; \omega_k]\} \geq F_k(x') > \sup\{F_k(x) :\; x \in [\omega_0\; ...\; \omega_k]\} - \epsilon \;.
\end{equation*}

Furthermore, since the map $F_k : \Sigma \to \mathbb{R}$ is continuous on cylinders (because such a function is uniform limit of continuous functions), there exists $m \in \mathbb{N}$, such that for any $y \in [x'_0\; ...\; x'_k\; ...\; x'_{k+m}]$,  where $x' = (x'_n)_{n \in \mathbb{N}}$,  the following inequalities hold true
\begin{equation*}
\sup\{F_k(x) :\; x \in [\omega_0\; ...\; \omega_k]\} \geq F_k(y) \geq \sup\{F_k(x) :\; x \in [\omega_0\; ...\; \omega_k]\} - 2\epsilon \;.
\end{equation*}

In particular
\begin{equation*}
\inf\{F_k(x) :\; x \in [x'_0\; ...\; x'_{k+m}]\} \geq \sup\{F_k(x) :\; x \in [\omega_0\; ...\; \omega_k]\} - 2\epsilon\;.
\end{equation*}

In addition,  also by \eqref{SMM2},  it follows that
\begin{align*}
\frac{1}{t}\log(\mu_t([\omega_0\; ...\; \omega_k])) 
&= \frac{1}{t}\log(h_t(\omega_0)) + \frac{1}{t}\log(h_t(\omega_k)) + \sum_{i = 0}^{k-1}\Bigl(\phi(\omega_i \omega_{i+1}) - \frac{P_G(t\phi)}{t}\Bigr) \\
&\geq \inf\{F_{k, t}(x) :\; x \in [\omega_0\; ...\; \omega_k]\} \;.
\end{align*}

Then,  by monotonicity of $\mu_t$,  we obtain the following expression
\begin{align*}
\frac{1}{t}\log(\mu_t([\omega_0\; ...\; \omega_k])) 
&\geq \frac{1}{t}\log(\mu_t([x'_0\; ...\; x'_{k+m}])) \\ 
&\geq \inf\{F_{k, t}(x) :\; x \in [x'_0\; ...\; x'_{k+m}]\} \;.
\end{align*}

Therefore,  taking the limit when $n \to \infty$ on the sequence $(t_n)_{n \in \mathbb{N}}$ obtained in Lemma \ref{CSL},  it follows that
\begin{align*}
\liminf_{n \to \infty} \frac{1}{t_n}\log(\mu_{t_n}([\omega_0\; ...\; \omega_k])) 
&\geq \inf\{F_k(x) :\; x \in [x'_0\; ...\; x'_{k+m}]\} \\
&\geq \sup\{F_k(x) :\; x \in [\omega_0\; ...\; \omega_k]\} - 2\epsilon\;.
\end{align*}

Since $\epsilon$ is arbitrary,  taking the limit when $\epsilon$ goes to $0$,  we conclude that the limit in expression \eqref{LLDP} holds.
\end{proof}

Now we are able to prove the first one of the main results in this paper. Actually,  in Theorem \ref{LDPT} we are able to obtain an optimal rate function satisfying the large deviations principle,  the proof of the result is such as follows.

\begin{proof}[Proof of Theorem \ref{LDPT}]
By Lemma \ref{LLDPT}, we have that for any $k \in \mathbb{N}$ and any cylinder $[\omega_0\; ..., \omega_k]$ the limit in \eqref{LLDP} holds. Besides that, since $I(x) \leq F_{k+1}(x) \leq F_k(x)$ for any $x \in \Sigma$ and each $k \in \mathbb{N}$,  it follows that
\begin{equation*}
\sup\{F_{k+1}(x) :\; x \in [\omega_0\; ...\; \omega_k]\} \leq \sup\{F_k(x) :\; x \in [\omega_0\; ...\; \omega_k]\}\;. 
\end{equation*}

In particular, when $\omega$ is an admissible word, for each $k \geq |\omega|$ we have
\begin{equation*}
\sup\{I(x) :\; x \in [\omega]\} \leq \sup\{F_k(x) :\; x \in [\omega]\} \leq \sup\{F_{|\omega|}(x) :\; x \in [\omega]\}\;.
\end{equation*}

By the former expression and the one appearing in \eqref{LLDP},  it follows that
\begin{equation*}
\sup\{I(x) :\; x \in [\omega]\} \leq \sup\{F_{|\omega|}(x) :\; x \in [\omega]\} = \lim_{n \to \infty} \frac{1}{t_n}\log(\mu_{t_n}([\omega])) \;.
\end{equation*}

On the other hand, by the properties of the supremum, given $\epsilon > 0$, there is $x' \in [\omega]$ such that
\begin{equation*}
\sup\{I(x) :\; x \in [\omega]\} \geq I(x') \geq \sup\{I(x) :\; x \in [\omega]\} - \epsilon \;.
\end{equation*}

Furthermore, by upper semi-continuity of the map $I$ (which is the supremum of continuous maps),  there exists $n_0 \in \mathbb{N}$, such that, for any $y$ belonging to the cylinder $[x'_0\; ...\; x'_{|\omega|+n_0-1}]$, we have
\begin{equation*}
\sup\{I(x) :\; x \in [\omega]\} \geq I(y) \geq \sup\{I(x) :\; x \in [\omega]\} - 2\epsilon \;.
\end{equation*}

In particular, the former expression implies
\begin{equation}
\label{Inf}
\inf\{I(x) :\; x \in [x'_0\; ...\; x'_{|\omega|+n_0-1}]\} \geq \sup\{I(x) :\; x \in [\omega]\} - 2\epsilon \;.
\end{equation}

Therefore,  by \eqref{SMM2} and the monotonicity of $\mu_t$,  for each $t \geq 1$ we obtain
\begin{equation*}
\frac{1}{t}\log(\mu_t([\omega])) \geq \frac{1}{t}\log(\mu_t([x'_0\; ...\; x'_{|\omega|+n_0-1}])) \geq \inf\{F_{k, t}(x) :\; x \in [x'_0\; ...\; x'_{|\omega|+n_0-1}]\} \;.
\end{equation*}

Then, taking the limit when $n \to \infty$ on the sequence $(t_n)_{n \in \mathbb{N}}$ obtained in Lemma \ref{CSL}, by \eqref{Inf} and the former expression,  it follows that
\begin{align*}
\lim_{n \to \infty}\frac{1}{t_n}\log(\mu_{t_n}([\omega]))
&\geq \inf\{F_k(x) :\; x \in [x'_0\; ...\; x'_{|\omega|+n_0-1}]\} \\
&\geq \inf\{I(x) :\; x \in [x'_0\; ...\; x'_{|\omega|+n_0-1}]\} \\
&\geq \sup\{I(x) :\; x \in [\omega]\} - 2\epsilon \;.
\end{align*}

So, taking the limit when $\epsilon$ goes to $0$,  we obtain
\begin{equation*}
\lim_{n \to \infty}\frac{1}{t_n}\log(\mu_{t_n}([\omega])) \geq \sup\{I(x) :\; x \in [\omega]\} \;.
\end{equation*}
\end{proof}

The next proposition establish a relation between the rate function that appears at the results presented in \cite{MR2210682} and \cite{zbMATH07528592},  and the rate function appearing at Theorem \ref{LDPT} in this paper.  The statement of the result is the following one.

\begin{proposition}
\label{IS}
Consider the map $I$ stated in Theorem \ref{LDPT} and assume that $I(x) > -\infty$ for any $x \in \Sigma$. Then, $I(x) = \sum_{i \in \mathbb{N}_0} (V(x_i) - V(x_{i+1}) + \phi(x_ix_{i+1}))$ for each $x \in \Sigma$.
\end{proposition}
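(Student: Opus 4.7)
The plan is to telescope $F_k(x)$ in two complementary ways, conclude that $F_k \downarrow I$, and reduce the target identity to showing a boundary term vanishes in the limit.

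Set $g(ab) := V(a)-V(b)+\phi(ab)$ and $g^\intercal(ab) := V^\intercal(b)-V^\intercal(a)+\phi(ab)$; both are non-positive on admissible pairs by the sub-action identities with $\alpha(\phi)=0$. Expanding $V(x_0) = V(x_k)+\sum_{i=0}^{k-1}(V(x_i)-V(x_{i+1}))$ inside $F_k$, and separately expanding $V^\intercal(x_k) = V^\intercal(x_0)+\sum_{i=0}^{k-1}(V^\intercal(x_{i+1})-V^\intercal(x_i))$, one obtains the two representations
\begin{equation*}
F_k(x) \;=\; \psi(x_k) + \sum_{i=0}^{k-1} g(x_i x_{i+1}) \;=\; \psi(x_0) + \sum_{i=0}^{k-1} g^\intercal(x_i x_{i+1}),
\end{equation*}
where $\psi := V + V^\intercal$. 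A direct calculation yields $F_{k+1}(x)-F_k(x) = g^\intercal(x_k x_{k+1}) \leq 0$, hence $F_k(x) \downarrow I(x)$. Moreover, from $\sum_{a \in S} h_t(a) h^\intercal_t(a) = 1$, Lemma \ref{L1L} applied with $f_t = \tfrac{1}{t}\log(h_t h^\intercal_t)$ (whose pointwise limit is $\psi$ by Lemma \ref{CSL} and Remark \ref{BCSL}) gives $\sup_{a \in S}\psi(a) = 0$, and in particular $\psi \leq 0$.

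The first representation with $\psi \leq 0$ gives $F_k(x) \leq \sum_{i=0}^{k-1} g(x_i x_{i+1})$, and monotonicity in $k$ (since $g \leq 0$) yields $I(x) \leq \sum_{i \in \mathbb{N}_0} g(x_i x_{i+1})$. For the reverse inequality, the second representation forces $I(x) = \psi(x_0) + \sum_{i \in \mathbb{N}_0} g^\intercal(x_i x_{i+1})$, and the hypothesis $I(x) > -\infty$ together with finiteness of $\psi(x_0)$ makes both $\sum g$ and $\sum g^\intercal$ (along the orbit) finite. The pointwise identity $g(ab) - g^\intercal(ab) = \psi(a) - \psi(b)$, verified directly, and absolute convergence then telescope to
\begin{equation*}
\sum_{i \in \mathbb{N}_0} \bigl(g(x_i x_{i+1}) - g^\intercal(x_i x_{i+1})\bigr) = \psi(x_0) - \lim_{k \to \infty} \psi(x_k),
\end{equation*}
so the target identity $I(x) = \sum_{i \in \mathbb{N}_0} g(x_i x_{i+1})$ is equivalent to $\lim_k \psi(x_k) = 0$.

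The main obstacle — and the only place where the hypothesis $I(y) > -\infty$ for \emph{every} $y \in \Sigma$ is essential — is showing $\lim_k \psi(x_k) = 0$. Passing to the limit in the direct relation $F_{k+1}(x) = F_k(\sigma x) + g(x_0 x_1)$ yields the cocycle $I(x) = I(\sigma x) + g(x_0 x_1)$, which iterates to $I(\sigma^k x) = I(x) - \sum_{i=0}^{k-1} g(x_i x_{i+1})$; since $g \leq 0$, the sequence $I(\sigma^k x)$ is non-decreasing, while $I(\sigma^k x) \leq F_0(\sigma^k x) = \psi(x_k) \leq 0$ keeps it bounded above by $0$. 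The uniform finiteness $I(\sigma^k x) > -\infty$ along the orbit, together with $\sup_{a \in S}\psi(a) = 0$, is what should pin this limit to $0$; the technical crux of the argument is to verify that the summability of $|g|$ and $|g^\intercal|$ along the orbit of $x$ forces $x_k$ eventually into the Aubry-like set $\{a \in S : \psi(a) = 0\}$, which makes $\psi(x_k)$ converge to $0$ and closes the proof.
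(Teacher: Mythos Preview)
Your reduction is correct and essentially mirrors the paper's: you telescope $F_k$ both ways, obtain $F_k(x)=\psi(x_k)+\sum_{i<k}g(x_ix_{i+1})$, use $\sum_a\pi_t(a)=1$ with Lemma~\ref{L1L} to get $\sup_a\psi(a)=0$, and isolate the problem as showing $\lim_k\psi(x_k)=0$. Up to this point the plan is sound and matches the paper's argument almost verbatim.

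The gap is in your last paragraph. The mechanism you propose --- that summability of $|g|$ and $|g^\intercal|$ along the orbit by itself forces $x_k$ into $\{a:\psi(a)=0\}$ --- does not work as stated. Summability gives $g(x_kx_{k+1})\to 0$ and $g^\intercal(x_kx_{k+1})\to 0$, and your identity $g-g^\intercal=\psi(a)-\psi(b)$ then yields only $\psi(x_k)-\psi(x_{k+1})\to 0$. This is compatible with $\psi(x_k)\to c$ for \emph{any} $c\le 0$; nothing in the cocycle bookkeeping or in the inequality $I(\sigma^kx)\le\psi(x_k)\le 0$ pins $c$ to zero. You need a genuinely dynamical input.

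What the paper supplies at this point is the non-wandering set $\Omega(\phi)$. From $g(x_ix_{i+1})\to 0$ one argues that every accumulation point of $(\sigma^n x)_{n\ge 0}$ lies in $\Omega(\phi)$; the paper then cites \cite{MR2279266} to the effect that on $\Omega(\phi)$ \emph{both} sub-action inequalities are equalities along the orbit, so $\psi$ is constant there and (since $\Omega(\phi)$ meets the support of any maximizing measure) equal to $\sup_a\psi(a)=0$. Because $V,V^\intercal$ depend only on a single coordinate and $S$ is discrete, any accumulation point $z$ forces $x_{n_j}=z_0$ along a subsequence, whence $\psi(x_{n_j})=0$; since your own representation $F_k(x)=\psi(x_k)+\sum_{i<k}g(x_ix_{i+1})$ already shows $\lim_k\psi(x_k)$ exists, the full limit is $0$. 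This is the missing idea your plan should name explicitly rather than leave as a ``technical crux''.
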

\begin{proof}
At first, note that for each $k \in \mathbb{N}$, we have 
\begin{align*}
\sum_{i = 0}^{k-1} (V(x_i) - V(x_{i+1}) + \phi(x_ix_{i+1}))
&= V(x_0) - V(x_k) + \sum_{i = 0}^{k-1} \phi(x_ix_{i+1}) \\
&= -(V(x_k) + V^\intercal(x_k)) + F_k(x) \;.
\end{align*}

Therefore, taking the limit when $k \to \infty$, we obtain the following expression
\begin{equation*}
\sum_{i \in \mathbb{N}_0} (V(x_i) - V(x_{i+1}) + \phi(x_ix_{i+1})) = \lim_{k \to \infty} -(V(x_k) + V^\intercal(x_k)) + F_k(x) \;. 
\end{equation*}

Then,  it is enough to prove that $\lim_{k \to \infty} -(V(x_k) + V^\intercal(x_k))$ exists and it is equal to $0$.  

Indeed,  in \cite{MR2279266} was proven that given any $\tilde{x} \in \Omega(\phi)$,  for each $i \in \mathbb{N}_0$ we have 
\begin{equation*}
V(\tilde{x}_{i+1}) = \phi(\tilde{x}_i\tilde{x}_{i+1}) + V(\tilde{x}_i) \;,
\end{equation*}
and
\begin{equation*}
V^\intercal(\tilde{x}_i) = \phi(\tilde{x}_i\tilde{x}_{i+1}) + V^\intercal(\tilde{x}_{i+1}) \;. 
\end{equation*}

By the above,  it follows that for each $i \in \mathbb{N}_0$ the following expression holds
\begin{equation*}
V(\tilde{x}_i) + V^\intercal(\tilde{x}_i) = V(\tilde{x}_{i+1}) + V^\intercal(\tilde{x}_{i+1}) = \sup\{V(a) + V^\intercal(a):\; a \in S\}\;,
\end{equation*}
where the right side equality follows from the definitions of forward (resp.  backward) sub-action and the fact that $\tilde{x} \in \mathrm{supp}(\mu)$ for any $\mu \in \mathcal{M}_{\max}(\phi)$.

Besides that, since $\sum_{a \in S} h_t(a)h_t^\intercal(a) = \sum_{a \in S} \pi_t(a) = 1$ (see \cite{MR767801} for details), by Lemma \ref{CSL} and Remark \ref{BCSL}, we obtain that
\begin{equation*}
0 = \lim_{n \to \infty}\frac{1}{t_n} \log\Bigl( \sum_{a \in S} e^{t_n(\frac{1}{t_n}\log(h_{t_n}(a)) + \frac{1}{t_n}\log(h^\intercal_{t_n}(a)))} \Big) = \sup\{V(a) + V^\intercal(a) :\; a \in S\}\;.
\end{equation*}

On the other hand,  since we are assuming that $I(x) > -\infty$ for each $x \in \Sigma$, the following limit holds $\lim_{i \to \infty} V(x_i) - V(x_{i+1}) + \phi(x_i x_{i+1}) = 0$. Therefore, any accumulation point of the sequence $(\sigma^n(x))_{n \in \mathbb{N}}$ belongs to $\Omega(\phi)$.  In particular,  by the continuity of $V$ and $V^\intercal$,  the above implies that
\begin{equation*}
\lim_{k \to \infty} V(x_k) + V^\intercal(x_k) = \sup\{V(a) + V^\intercal(a):\; a \in S\} = 0 \;.
\end{equation*}

That is,  the proposition holds true.
\end{proof}

\subsection{The bilateral CMS approach}
\label{BLDP-section}

In this section, we present the proof of Theorem \ref{BLDPT} which show the relation between the theory developed in the previous section with the one formulated through involution kernels (see \cite{MR3377291} for details). The best of our knowledge in here is to present an extension of the large deviation principle appearing in Theorem \ref{LDPT} to the setting of bilateral countable Markov shifts using a similar approach to the ones appearing at \cite{MR2210682} and \cite{zbMATH07528592}. 

In fact,  the next lemma shows existence of an involution kernel in the setting that we are interested in.

\begin{lemma}
\label{IKL}
Consider $W : \widehat{\Sigma} \to \mathbb{R}$ given by $W(x) = W(x_{-1}x_0) := \phi(x_{-1}x_0)$. Then, the map $\phi^\intercal : \widehat{\Sigma} \to \mathbb{R}$ defined by $\phi^\intercal(x) = \phi^\intercal(x_{-2}x_{-1}) := \phi(x_{-2}x_{-1})$ satisifies the following expression
\begin{equation}
\label{IK2}
(\phi^\intercal + W)(x) = (\widehat{\phi} + W)(\widehat{\sigma}^{-1}(x)) \;.
\end{equation}
\end{lemma}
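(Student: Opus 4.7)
The plan is to verify the identity \eqref{IK2} by a direct computation, unpacking the definitions of $\widehat{\phi}$, $W$, and $\widehat{\sigma}^{-1}$. The key observation is that because $\phi$ is assumed to be a Markov potential, $\widehat{\phi}(y) = \phi(y_0 y_1)$ depends on only two consecutive coordinates, which makes the computation purely combinatorial in the indices.

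First I would record how $\widehat{\sigma}^{-1}$ acts on coordinates: if $x = (\ldots x_{-1}|x_0 x_1 \ldots) \in \widehat{\Sigma}$, then $\widehat{\sigma}^{-1}(x)$ has entry $x_{k-1}$ in position $k$ for every $k \in \mathbb{Z}$. Hence the coordinate-zero and coordinate-one entries of $\widehat{\sigma}^{-1}(x)$ are $x_{-1}$ and $x_0$, while its coordinate-$(-1)$ entry is $x_{-2}$.

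With this in hand the right-hand side of \eqref{IK2} unfolds as
\begin{equation*}
(\widehat{\phi} + W)(\widehat{\sigma}^{-1}(x)) = \phi(x_{-1} x_0) + \phi(x_{-2} x_{-1}),
\end{equation*}
using $\widehat{\phi}(\widehat{\sigma}^{-1}(x)) = \phi(x_{-1}x_0)$ and $W(\widehat{\sigma}^{-1}(x)) = \phi(x_{-2}x_{-1})$. The left-hand side is, directly from the definitions in the statement, $\phi^\intercal(x) + W(x) = \phi(x_{-2}x_{-1}) + \phi(x_{-1}x_0)$. The two expressions coincide, which yields \eqref{IK2}.

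There is no substantive obstacle here: the lemma is essentially a bookkeeping check that the proposed $W$ does the job of shifting the dependence of $\widehat{\phi}$ from the positive to the negative coordinates. The only point worth being careful about is the indexing convention for $\widehat{\sigma}^{-1}$, since a sign slip there would break the identity; once that is pinned down the equality is immediate and no admissibility or convergence issues arise because only finitely many coordinates of $x$ are ever evaluated.
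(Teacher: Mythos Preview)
Your proof is correct and follows essentially the same route as the paper: a direct computation of both sides of \eqref{IK2}, using the Markov dependence of $\phi$, yields $\phi(x_{-2}x_{-1}) + \phi(x_{-1}x_0)$ in each case. The only addition is your explicit description of how $\widehat{\sigma}^{-1}$ acts on coordinates, which the paper leaves implicit.
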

\begin{proof}
Note that the left side of the expression in \eqref{IK2} is given by
\begin{equation*}
(\phi^\intercal + W)(x) 
= \phi^\intercal(x) + W(x)
= \phi(x_{-2}x_{-1}) + \phi(x_{-1}x_0) \;.  
\end{equation*}

On the other hand, the right side of the expression in \eqref{IK2} satisfies
\begin{equation*}
(\widehat{\phi} + W)(\widehat{\sigma}^{-1}(x)) 
= \widehat{\phi}(\widehat{\sigma}^{-1}(x)) + W(\widehat{\sigma}^{-1}(x))
= \phi(x_{-1}x_0) + \phi(x_{-2}x_{-1}) \;. 
\end{equation*}

The above guarantees that the expression in \eqref{IK2} holds true, which is equivalent to have that $W$ is an involution kernel. 
\end{proof}

Now we have all the necessary tools to prove the second main result in this paper. The proof of the result is such as follows.

\begin{proof}[Proof of Theorem \ref{BLDPT}]
Consider $\widehat{I} : \widehat{\Sigma} \to \mathbb{R}$ the natural extension of the rate map $I$ stated in Theorem \ref{LDPT}. So, by Proposition \ref{IS}, we have that any $x \in \widehat{\Sigma}$ satisfies the expression
\begin{equation*}
\widehat{I}(x) = \sum_{i \in \mathbb{N}_0} (V(x_i) - V(x_{i+1}) + \phi(x_ix_{i+1}))
\end{equation*}

By the above and Lemma \ref{IKL}, it follows that
\begin{align*}
\widehat{I}(x) 
=& \lim_{k \to \infty} V(x_0) - V(x_k) + \sum_{i=0}^{k-1} \phi(x_ix_{i+1}) \\
=& \lim_{k \to \infty} V(x_0) - V(x_k) + \sum_{i=0}^{k-1} \phi^\intercal(x_{i-1}x_i) + W(x_ix_{i+1}) - W(x_{i-1}x_i) \\
=& \lim_{k \to \infty} V(x_0) - V(x_k) + W(x_{k-1}x_k) - W(x_{-1}x_0) + \sum_{i=0}^{k-1} \phi^\intercal(x_{i-1}x_i) \\
=& V(x_0) + V^\intercal(x_{-1}) - W(x_{-1}x_0) + \lim_{k \to \infty}(W(x_{k-1}x_k) - V^\intercal(x_{k-1}) - V(x_k)) \\
&+ \sum_{i \in \mathbb{N}_0} (V^\intercal(x_i) - V^\intercal(x_{i-1}) + \phi^\intercal(x_{i-1}x_i)) \;.
\end{align*}

Note that the series in the last expression above is a convergent one, because $\widehat{I}(x) > -\infty$ for each $x \in \widehat{\Sigma}$. 

Therefore, it is enough to show that the following expression holds 
\[
\lim_{k \to \infty}(W(x_{k-1}x_k) - V^\intercal(x_{k-1}) - V(x_k)) = 0 \;.
\] 

Indeed, by continuity of the maps $V$ and $V^\intercal$,  we have the following limit $\lim_{k \to \infty}(V^\intercal(x_{k-1}) + V(x_k)) = 0$. Besides that,  since each one of the accumulation points of the sequence $(\sigma^n(x))_{n \in \mathbb{N}}$ belongs to $\Omega(\phi)$ and $\alpha(\phi) = 0$,  it folllows that $\lim_{k \to \infty}W(x_{k-1}x_k) = \lim_{k \to \infty}\phi(x_{k-1}x_k) = 0$.  Then, our result holds true. 
\end{proof}

\section*{Acknowledgments} This work was supported by the Foundation for Science and Technology (FCT) Project UIDP/00144/2020. The author would like to thank to professor A. O. Lopes for his helpful comments about this paper and the University of Porto for the hospitality.

\end{document}